\documentclass[a4paper,12pt, oneside]{amsart}

\usepackage{amsthm, amsmath}
\usepackage{amssymb,amsfonts,amscd,verbatim, longtable}
\usepackage{xy}
\usepackage{mathrsfs}
\usepackage{wrapfig}
\usepackage{tikz}
\usepackage{fancyvrb}
\pagestyle{myheadings}
\xyoption{all}
\SelectTips{cm}{10}
%\usepackage{pst-func}
%\raggedbottom
%\usepackage[pdftex]{hyperref}
\usepackage{ifpdf}
\ifpdf
\usepackage{hyperref}
\else
\usepackage[hypertex]{hyperref}
\fi

\setcounter{tocdepth}{1}

\newtheorem{tr}{Theorem}[section]
\newtheorem*{tr*}{Theorem}
\newtheorem{lemma}[tr]{Lemma}
\newtheorem{pr}[tr]{Proposition}

\theoremstyle{remark}
\newtheorem{rem}[tr]{Remark}

%%
%%page parameters
%%
\textwidth 17cm
\textheight 25cm
\voffset -1in
%without margins
\oddsidemargin -1in
\addtolength{\oddsidemargin}{25mm}

\newlength{\myevenmargin}
\setlength{\myevenmargin}{\paperwidth}
\addtolength{\myevenmargin}{-\oddsidemargin}
\addtolength{\myevenmargin}{-2in}
\addtolength{\myevenmargin}{-\textwidth}
\setlength{\evensidemargin}{\myevenmargin}

%for wide margins
%\hoffset -2cm
\marginparwidth=25mm
\marginparsep=5mm

\def\differential{d}
\renewcommand\d\differential

\DeclareMathOperator\im{Im}

\DeclareMathOperator\Aut{Aut}

\DeclareMathOperator\GL{GL}

\DeclareMathOperator\Sing{Sing}

%tangent space

\DeclareMathOperator\Supp{{\rm Supp}}

\def\k{\Bbbk}
\renewcommand\Im\im
\def\bb#1{\mathbb #1}
\def\cal#1{\mathcal #1}

\def\frak#1{\mathfrak{#1}}

\def\ra{\rightarrow}
\def\xra{\xrightarrow}

\def\mto{\mapsto}

\def\pmat#1{\begin{pmatrix}#1\end{pmatrix}}

\def\smat#1{\left(\begin{smallmatrix}#1\end{smallmatrix}\right)}

\def\point#1{\langle #1 \rangle}

\def\refeq#1{$(\ref{#1})$}
\def\pt{\mathrm{pt}}

\def\P{\bb P}

\def\O{\cal O}

%this is to avoid different symbols for signs of inequalities:

%this is to avoid different symbols of *
\let\star *
%this is to avoid different symbols for inclusions
\let\subset\subseteq

\def\defeq{:=}

\def\iso{\cong}

\title[On the singular sheaves in the fine Simpson moduli spaces]{On the singular sheaves in the fine Simpson moduli spaces of $1$-dimensional sheaves supported on plane quartics.}
\author{Oleksandr Iena}
\address{University of Luxembourg, Campus Kirchberg\\
Mathematics Research Unit\\
6, rue Richard Coudenhove-Kalergi\\
L-1359 Luxembourg City\\
Grand Duchy of Luxembourg}
\email{oleksandr.iena@uni.lu}
\date{}

\subjclass[2010]{14D20}
\keywords{Simpson moduli spaces, coherent sheaves, vector bundles on curves, singular sheaves}

\begin{document}
\begin{abstract}
In the case of the fine Simpson moduli spaces of $1$-dimensional sheaves supported on plane quartics, the subvariety of sheaves that are not locally free on their support is connected, singular, and has codimension $2$.
\end{abstract}
\maketitle
%\tableofcontents

\section{Introduction}
It was shown by C.~Simpson in~\cite{Simpson1} that
for an arbitrary smooth projective variety $X$  and for an arbitrary
numerical  polynomial $P\in\bb Q[m]$ there is a coarse projective moduli space
$M\defeq M_P(X)$ of semi-stable sheaves on $X$ with Hilbert polynomial $P$.

\subsection*{Singular sheaves}
In general $M$ contains  a closed subvariety $M'$ of sheaves that are not locally free on their support. Its complement $M_B$ is then  an open dense subset whose points are sheaves that are locally free on their support.
So, one could consider $M$ as a compactification of $M_B$. We call the sheaves
from the boundary $M'=M\smallsetminus M_B$ \textit{singular}.
The boundary $M'$ does not have the minimal codimension in general. Loosely speaking, one glues together too many different directions at infinity.

\subsection*{First examples on a projective plane}
Let $\k$ be an algebraically closed field of characteristic zero, let $V$ be a vector space over $\k$ of dimension $3$, and let $\P_2=\P V$ be the corresponding projective plane. Let $P(m)=am+b$, $a\in \bb Z_{>0}$, $b\in \bb Z$ be a linear Hilbert polynomial.
Notice that twisting with $\O_{\P_2}(1)$ gives the isomorphism $M_{am+b}(\P_2)\iso M_{am+b+a}(\P_2)$. Moreover, by the duality result from~\cite{MaicanDuality}, $M_{am+b}(\P_2)\iso M_{am+a-b}(\P_2)$. Therefore, for fixed $a$, it is enough to understand at most $a/2+1$ different moduli spaces.

If $a=1$, then $M_{m+b}$ is a fine moduli space that consists of twisted structure sheaves $\O_L(b-1)$ of lines $L$ in $\P_2$. Therefore, each $M_{m+b}$ is just the dual projective plane $\P_2^*=\P V^*$. In this case there are no singular sheaves.

If $a=2$ and $b=2\beta +1$ is odd, then $M_{2m+b}$ is a fine moduli space whose points are the isomorphism classes of twisted structure sheaves $\O_C(\beta)$ of planar conics $C\subset \P_2$. In this case $M_{2m+b}$ is isomorphic to the space of conics $\P S^2V^*$, as in the previous case the subvariety $M'_{2m+b}$ of singular sheaves is empty.

The situation changes for $a=3$. For $b\in \bb Z$ with $\gcd(3, b)=1$ all moduli spaces $M_{3m+b}$ are isomorphic to the universal plane cubic curve and $M'_{3m+b}$ is a smooth subvariety of codimension $2$  isomorphic to the universal singular locus of a cubic curve (cf.~\cite{IenaUnivCurve}).

\subsection*{The main result of the paper}
In this note we study the subvariety of singular sheaves
 in the case of $M=M_{4m+a}(\P_2)$, $\gcd(4, a)=1$, i.~e., for the fine Simpson moduli spaces, which consist entirely of stable points and parameterize the isomorphism classes of sheaves. As already mentioned above, it is enough to consider the case $a=-1$.

 In~\cite{LePotier} it has been shown that $M$ is a smooth projective variety of dimension $17$.
 The main result of the paper is the following statement.
\begin{pr}\label{pr: main}
Let $M$ be the Simpson moduli space of semi-stable sheaves with Hilbert polynomial
$P(m)=4m+a$, $\gcd(4, a)=1$. Let $M'\subset M$ be the subvariety of singular sheaves. Then $M'$ is a singular (path-)connected subvariety of codimension $2$.
\end{pr}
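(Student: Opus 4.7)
The plan is to prove $\codim_M M'=2$ by a direct dimension count: to show $\dim M'\le 15$ and to exhibit a $15$-dimensional subvariety of $M'$. Since Le~Potier's theorem \cite{LePotier} gives $\dim M=17$, this yields codimension exactly $2$.

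A stable sheaf $[F]\in M$ is pure of dimension $1$, so its Fitting support $C=\Supp F$ is a curve of degree $4$ and arithmetic genus $3$ in $\P_2$, and $F$ has generic rank $1$ on every irreducible component of $C$. At every smooth point of the support, a pure rank-$1$ sheaf is automatically invertible; hence $[F]\in M'$ if and only if $C$ is singular (possibly reducible or non-reduced) and $F$ fails to be invertible at some such singular point. The first step is to fibre $M$ over the linear system $|\O_{\P_2}(4)|=\P^{14}$ via the support map and to stratify $M'$ according to the geometric type of the support curve.

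The \emph{principal stratum} consists of pairs $(C,p)$ in which $C$ is an integral quartic with a unique node at $p$, together with an $F$ that fails to be locally free exactly at $p$. Inside $\P^{14}\times\P_2$ the condition that $p$ be a singular point of $C$ is cut out by the vanishing of the three partial derivatives of the defining quartic at $p$, giving a subvariety of codimension $3$; so such pairs $(C,p)$ form a $13$-dimensional family. The normalisation $\tilde C$ of such a $C$ is smooth of geometric genus $2$, and the non-invertible rank-$1$ torsion-free sheaves on $C$ with the prescribed Euler characteristic are exactly the direct images of degree-fixed line bundles from $\tilde C$, forming a $2$-dimensional fibre. This stratum therefore has dimension $13+2=15$, already attaining codimension $2$ in $M$.

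It remains to bound the dimension of $M'$ on the other strata: quartics with several or worse singularities (multiple nodes, cusps, tacnodes, triple points), reducible supports (cubic plus line, two conics, conic plus two lines, four lines), and non-reduced ones (double conic, triple line plus line, quadruple line). The twist $F\mapsto F(1)$ gives an isomorphism $M_{4m+a}\iso M_{4m+a+4}$, reducing the problem to $a\in\{1,3\}$; a Serre-type duality $F\mapsto\cExt^1(F,\omega_{\P_2})$ can then be used to interchange these two residues. In each remaining stratum, imposing additional conditions on $C$ drops the dimension of the support locus at least as fast as the non-locally-free fibre can grow, so the total dimension stays $\le15$. The main technical obstacle I anticipate is the non-reduced stratum, where one cannot appeal to the geometric genus of a normalisation and the relative compactified Jacobian must instead be analysed directly by means of explicit resolutions of sheaves on the non-reduced support.
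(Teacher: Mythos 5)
Your argument splits into a lower bound ($\dim M'\ge 15$, via the nodal stratum) and an upper bound ($\dim M'\le 15$, via all the other strata). The lower bound is sound: the incidence variety of pairs $(C,p)$ with $p\in\Sing C$ has dimension $13$, and over a uninodal integral quartic the non-locally-free rank-one torsion-free sheaves are pushforwards of line bundles from the genus-$2$ normalisation, giving a $2$-dimensional fibre; this matches the count $13+2=15$ obtained in the paper by a different parametrisation. The gap is the upper bound. The sentence ``imposing additional conditions on $C$ drops the dimension of the support locus at least as fast as the non-locally-free fibre can grow'' is exactly the assertion that needs proof, and it is not obviously true stratum by stratum: over reducible and especially non-reduced quartics the fibre of the support map $M\to\P_{14}$ jumps well above the generic dimension $3$ (over a double conic, for instance, the fibre is large and contains no compactified Jacobian in any usable sense), one must also track which pure sheaves are semistable, and there is no normalisation to push forward from. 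You acknowledge this for the non-reduced stratum, but without carrying out that analysis the proof establishes only $\codim_M M'\le 2$, not equality.

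The paper avoids this case-by-case analysis over the discriminant entirely. It uses the Dr\'ezet--Maican stratification of $M$ by the shape of the locally free resolution rather than by the type of the support curve: a closed stratum $M_1$ of dimension $15$ (handled by the universal-curve description, where $M_1'$ has codimension $2$ in $M_1$) and an open stratum $M_0$ whose sheaves are cokernels of $3\times 3$ matrices $A$ with a $3\times 2$ linear part. The key lemma is that $\cal E_A$ is singular iff the $2\times 2$ minors of $A$ have a common zero; combined with the (generically injective) morphism $\varphi$ to $\P(S)\times \P_2^{[3]}$, this identifies the generic part of $M'$ with the incidence variety of pairs $(C,Z)$, $Z\subset C$ of length $3$, with $Z\cap\Sing C\ne\emptyset$, whose dimension is $13+0+1+1=15$ uniformly in the type of $C$. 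If you want to complete your route, you would need explicit dimension estimates for the non-locally-free loci over each degenerate support type; the resolution-based criterion is precisely the tool that makes those estimates unnecessary.
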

\subsection*{Structure of the paper} In Section~\ref{section: description of strata} we give a detailed description of the stratification from~\cite{DrezetMaican4m} of the moduli space $M$ into an open stratum $M_0$ and its closed complement $M_1$.
In Section~\ref{section: M_0 in B} we describe the open stratum of $M$ as an open subvariety of a projective bundle over the space of Kronecker modules $N=N(3; 2, 3)$. In Section~\ref{section: singular sheaves} we give a characterization of singular sheaves in $M_0$ and study the fibres of $M_0$ over $N$, which allows us to demonstrate in Section~\ref{section: main result} the assertions of Proposition~\ref{pr: main}. In Section~\ref{section: sing and sing sheaves} we study,  for an isomorphism class $[\cal E]$ in $M_0$, how being singular is related with the singularities of the support of $\cal E$. The computations with \textsc{Singular} \cite{SingularProgram} used in the paper (the code and its output) are presented in Appendix~\ref{appendix: computations}.
\section{Description of $M_{4m-1}(\P_2)$}\label{section: description of strata}
Let $M$ be the Simpson moduli space of (semi-)stable sheaves on $\P_2$ with Hilbert polynomial $4m-1$.
In~\cite{DrezetMaican4m} it has been shown that $M$ can be decomposed into two strata $M_1$ and $M_0$ such that $M_1$ is a closed subvariety of $M$ of codimension  $2$ and $M_0$ is its open complement.

 \subsection{Closed stratum.}
 The closed stratum $M_1$ is a closed subvariety of $M$ of codimension $2$ given by the condition $h^0(\cal E)\neq 0$ (more precisely $h^0(\cal E)=1)$. It consists of the isomorphism classes of  sheaves with locally free resolutions
\begin{equation}\label{eq: res1}
0\ra 2\O_{\P_2}(-3)\xra{\smat{z_1&q_1\\z_2&q_2}} \O_{\P_2}(-2)\oplus \O_{\P_2}\ra \cal E\ra 0,
\end{equation}
where $z_1$ and $z_2$ are linear independent linear forms on $\P_2$.  $M_1$ is a geometric quotient of the variety of injective matrices $\smat{z_1&q_1\\z_2&q_2}$ as above by the non-reductive group
\[
( \Aut(2\O_{\P_2}(-3))\times \Aut(\O_{\P_2}(-2)\oplus \O_{\P_2}) )/\bb C^*
\]
(cf.~\cite{Drezet-Trm}). $M_1$ is isomorphic to the universal quartic plane curve
\[
\{(p, C) \mid \text{$C\subset \P_2$ is a quartic plane curve, $p\in C$}\}.
\]
The latter can be explained as follows. The sheaves with resolution~\refeq{eq: res1} are exactly the non-trivial extensions
\begin{equation}\label{eq: ext closed fibre}
0\ra \O_C\ra \cal E\ra \k_p\ra 0,
\end{equation}
where $C=C_A=Z(\det A)$ is the quartic curve defined by the determinant of $A$ and $p=p_A=Z(z_1,z_2)$ is the point on $C$ defined by two linear independent linear forms $z_1$ and $z_2$.

\subsection{Open stratum.}
The open stratum $M_0$ is the complement of $M_1$ given by the condition $h^0(\cal E)=0$,
it consists of the cokernels  $\cal E_A$ of the injective morphisms
\begin{equation}\label{eq: res0}
\O_{\P_2}(-3)\oplus 2\O_{\P_2}(-2)\xra{A} 3\O_{\P_2}(-1)
\end{equation}
with
\[
A=\pmat{q_0&q_1&q_2 \\z_0&z_1&z_2\\w_0&w_1&w_2}
\]
such that the $(2\times 2)$-minors of the linear part of $A$
are linear independent. Equivalently, the Kronecker module
\begin{equation}\label{eq: linear submatrix 2x3}
\alpha=\pmat{z_0&z_1&z_2\\w_0&w_1&w_2}
\end{equation}
is stable (cf.~\cite[Lemma~1]{Ellingsrud},~\cite[Proposition~15]{Drezet}).

\subsubsection{Twisted ideals of $3$ non-collinear points of $C$}
If the maximal minors of $\alpha$  are coprime, then  $\cal E_A\iso \cal I_Z(1)$, where $\cal I_Z$ is the ideal sheaf of the zero dimensional subscheme $Z\subset C$ of length $3$ defined by the maximal minors of $\alpha$. In this case the isomorphism class of $\cal E=\cal E_A$ is a part of the exact sequence
\begin{equation}\label{eq: twisted ideal}
0\ra \cal E\ra \O_C(1)\ra\O_Z\ra 0
\end{equation}
and is uniquely defined by $Z$ and $C$.

Let $M_{00}$ denote the open subscheme of all such sheaves in $M_0$.

\subsubsection{Extensions}
If the maximal minors of  $\alpha$ have a linear common factor, say  $l$, then $f=\det(A)=l\cdot h$ and $\cal E_A$ is in this case a non-split extension
\begin{equation}\label{eq: extension}
0\ra \O_L(-2)\ra \cal E_A\ra \O_{C'}\ra 0,
\end{equation}
where $L=Z(l)$, $C'=Z(h)$.

For fixed $L$ and $C'$ the subscheme of the isomorphism classes of non-trivial extensions
\refeq{eq: extension}
can be identified with $\k^2$.

Let $M_{01}$ denote the closed subscheme of $M_0$  of all such sheaves. Notice that $M_{01}$ is locally closed in $M$.

\subsubsection{$M_0$ as a geometric quotient.}
$M_0$ is the geometric quotient of the variety of injective matrices as in \refeq{eq: res0} by the group
\[
G'=\Aut(\O_{\P_2}(-3)\oplus 2\O_{\P_2}(-2)) \times\Aut(3\O_{\P_2}(-1)).
\]
As shown in~\cite{MaicanTwoSemiSt} $M_0$ can be seen as an open subvariety in the projective quotient $\bb B$ of the variety of all semistable matrices \refeq{eq: res0} by the same group.

\section{Description of $M_0$ as an open subvariety in $\bb B$}\label{section: M_0 in B}

\subsection{Kronecker modules}
Let $\bb V$ be the affine variety of Kronecker modules
\begin{equation}\label{eq:Kronecker}
2\O_{\P_2}(-1)\xra{\Phi} 3\O_{\P_2}.
\end{equation}
There is a natural group action of $G=(\GL_{2}(\k)\times\GL_3(\k))/\k^*$ on $\bb V$.
Since $\gcd(2, 3)=1$, all semistable points of this action are stable and $G$ acts freely on the open subset $\bb V^s$ of stable points. A Kronecker module~\refeq{eq:Kronecker} is stable if its maximal minors are linear independent quadratic forms.
There exists a geometric quotient $N=N(3; 2, 3)=\bb V^s/G$, which is a smooth projective variety of dimension $6$. For more details  consult~\cite[Section 6]{EllStro} and~\cite[Section~III]{Drezet}.

The cokernel of a stable Kronecker module $\Phi\in \bb V^s$ is an ideal of a zero-dimensional scheme $Z$ of length $3$ if the maximal minors of $\Phi$ are coprime. In this case  there is a locally free resolution
\begin{equation}\label{eq:sequence for points ideal}
0\ra 2\O_{\P_2}(-3)\xra{\Phi} 3\O_{\P_2}(-2)\xra{ \pmat{d_0\\d_1\\d_2 } } \O_{\P_2}\ra \O_{Z}\ra 0
\end{equation}
and, moreover, $Z$ does not lie on a line.
Let $\bb V_0$ denote the open subvariety of $\Phi\in \bb V^s$ of Kronecker modules with coprime maximal minors. Let $N_0\subset N$ be the corresponding open subvariety in the quotient space.

This way one obtains a morphism from $N_0\subset N$  to the Hilbert scheme $H$ of zero-dimensional subschemes of $\P_2$ of length $3$, which sends a class of $\Phi\in \bb V^s$ to the zero scheme of its maximal minors. Since, by Hilbert-Burch theorem, every zero dimensional scheme of length $3$ that does not lie on a line has a minimal resolution of type~\refeq{eq:sequence for points ideal}, this gives an isomorphism between $N_0$ and the open subvariety $H_0\subset H$ consisting of $Z$ that do not lie on a line.

Let $N'=N\setminus N_0$, then $N'$ is the quotient of the variety of Kronecker modules~\refeq{eq:Kronecker} whose maximal minors have a common linear factor.

Since every matrix representing a point in $N'$ is equivalent to a matrix $\smat{z_0&0&z_1\\0&z_0&z_2}$ with linear independent linear forms $z_0, z_1, z_2$, one can see that $N'$ is isomorphic to $\P_2^*=\P V^*$, the space of lines in $\P_2$,  such that a line corresponds to the common linear factor of the minors of the corresponding Kronecker module.

The complement $H'$ of $H_0$ is an irreducible hypersurface (cf.~\cite[p.~46]{DrezetAltern}, \cite{EllStro}). The isomorphism $H_0\ra N_0$ can be extended to the morphism $H\xra{\pi} N$ that describes $H$ as the blowing up of $N$ along $N'$. The fibre over $L\in \P_2^*$ consists of those $Z\in H$ lying on $L$, i.~e., the fibre over $L$ is $L^{[3]}\iso \P_3$, the Hilbert scheme of $3$ points on $L$.

\subsection{$\bb B$ as a projective bundle over $N$} Let us provide here the argument from the proof of~\cite[Proposition~7.7]{MaicanTwoSemiSt}.

Consider two vector spaces $\bb U_1=2\Gamma(\P_2, \O_{\P_2}(1))$ and
$\bb U_2=3\Gamma(\P_2, \O_{\P_2}(2))$.
One identifies elements of $\bb V\times \bb U_2$ with morphisms~\refeq{eq: res0} by
\[
(\Phi, Q)\mto \pmat{Q\\\Phi}.
\]
Both $\bb V\times \bb U_1$ and $\bb V\times \bb U_2$ are trivial vector bundles over $\bb V$.
Consider the morphism
\[
\bb V\times \bb U_1\xra{F} \bb V\times \bb U_2\iso \bb W, \quad (\Phi, L)\mto \pmat{L\cdot \Phi\\\Phi}.
\]
Since the matrices from $\bb V^s$ have linear independent maximal minors,
$F$ is injective over $\bb V^s$. Therefore, $\bb V^s\times \bb U_1\xra{F}\bb V^s\times \bb U_2$ is a vector subbundle and  hence the cokernel $E$ of $F$ is a vector bundle of rank $12$ on $\bb V^s$.

The group action of $\GL_{2}(\k)\times\GL_3(\k)$ on $\bb V^s\times \bb U_1$ and $\bb V^s\times \bb U_2$ induces a group action of $\GL_{2}(\k)\times\GL_3(\k)$ on $E$ and hence an action of  $G=(\GL_{2}(\k)\times\GL_3(\k))/\k^*$ on the projective bundle $\P E$.
Finally, since the stabilizer of $\Phi\in \bb V^s$ under the action of $G$ is trivial, $G$ acts trivially on the fibres of $\P E$ and thus $\P E$ descends to a projective $\P_{11}$-bundle
\[
\bb B\xra{\nu} N=N(3; n-1, n)=\bb V^s/G,
\]
which is exactly the geometric  quotient of $\bb V^s\times \bb U_2\setminus \Im F$ with respect to $G'$ mentioned above.

\subsubsection{The fibres of $\bb B\xra{\nu} N$ over $N_0$} A fibre over a point from  $N_0$ can be seen as the space of plane quartics through the corresponding subscheme of $3$ non-collinear points. Indeed, consider a point from $N_0$ given by a Kronecker module $\smat{z_0&z_1&z_2\\w_0&w_1&w_2}$ with coprime minors $d_0, d_1, d_2$. The fibre over such a point consists of the orbits of injective matrices
\[
\pmat{q_0&q_1&q_2 \\z_0&z_1&z_2\\w_0&w_1&w_2}, \quad q_0, q_1, q_2\in S^2V^*,
\]
under the group action of $G'$. In particular such a fibre is contained in $M_{00}$.
If two matrices
\[
\smat{q_0&q_1&q_2 \\z_0&z_1&z_2\\w_0&w_1&w_2}, \quad \smat{Q_0&Q_1&Q_2 \\z_0&z_1&z_2\\w_0&w_1&w_2}
\]
lie in the same orbit of the group action, then their determinants are equal up to a multiplication by a non-zero constant. Vice versa, if the determinants of two such matrices are equal, $q-Q=(q_0-Q_0, q_1-Q_1, q_2-Q_2)$ lies in the syzygy module of  $\smat{d_0\\d_1\\ d_2}$, which is generated by the rows of $\smat{z_0&z_1&z_2\\w_0&w_1&w_2}$ by Hilbert-Burch theorem. This implies that $q-Q$ is a combination of the rows and thus the matrices lie on the same orbit.

\subsubsection{$M_0$ and flags of subschemes on $\P_2$.}

Let $\bb B_{0}$ denote the restriction of $\bb B$ to $N_0$. Then $\bb B_0$ coincides with $M_{00}$ as the fibres over $N_0$ are contained in $M_0$.

Let $\P S^4 V^*=\P_{14}$ be the space of plane quartics. Let
\[
M\xra{\mu}\P S^4 V^*=\P_{14}, \quad [\cal E]\mto \Supp(\cal E),
\]
be the morphism sending an isomorphism class of sheaf $\cal E$ to its support. Then its restriction to $M_0$ is induced by the equivariant morphism that sends a matrix \refeq{eq: res0} defining a point in $M_0$ to the quartic determined by its determinant.

$\bb B_0$ is isomorphic to the image of the injective morphism
\begin{equation}\label{eq: map to flags}
\bb B_0\xra{\mu\times\nu} \P(S^4V^*) \times N_0\iso \P(S^4V^*) \times H_0,
\end{equation}
which coincides with the subvariety of
 pairs $(C, Z)$ with $Z\subset C$.
It is isomorphic to the open subscheme  $H_0(3, 4)\subset H(3, 4)$ of the Hilbert flag-scheme of flags  $Z\subset C\subset \P_2$ (zero-dimensional subscheme $Z$ of length $3$ on a curve $C\subset \P_2$ of degree $4$) such that $Z$ does not lie on a line.

\subsubsection{The fibres of $\bb B\xra{\nu} N$ over $N'$}\label{subsubsection: fibres over N'}
A fibre over $L\in N'$ can be seen as the join $J(L^*, \P S^3 V^*)\iso \P_{11}$ of $L^*\iso \P_1$ and the space of plane cubic curves $\P(S^3 V^*)\iso \P_9$. To see this assume $L=Z(x_0)$, i.~e., $L$ is given by $\smat{x_0&0&x_1\\0&x_0&x_2}$. Then the fibre over $L$ is given by the orbits of matrices
\begin{equation}\label{eq: fibre over N'}
\pmat{q_0(x_1, x_2)&q_1(x_1, x_2)&q_2(x_0, x_1, x_2)\\x_0&0&x_1\\0&x_0&x_2}
\end{equation}
and can be identified with the projective space
$\P(2H^0(L, \O_{L}(2))\oplus S^2V^*)$.
Rewrite the matrix~\refeq{eq: fibre over N'} as
\[
\smat{l\cdot x_2 -b(x_1, x_2)&-l\cdot x_1- c x_1^2 &a(x_0, x_1, x_2)\\x_0&0&x_1\\0&x_0&x_2}, \quad l(x_1, x_2)=\xi_1x_1+\xi_2x_2, \quad \xi_1, \xi_2\in \k.
\]
Its determinant equals
\(
x_0(a(x_0, x_1, x_2)\cdot x_0+b(x_0, x_1)\cdot x_1+ c\cdot x_2^2 ).
\)
This allows to reinterpret the fibre as the projective space
\[
\P( H^0(L, \O_{L}(1))\oplus S^3V^* )\iso J(L^*, \P S^3V^*).
\]
The intersection of the fibre with $M_0$ is $J(L^*, \P(S^3 V^*))\setminus L^*$. It is a rank $2$ vector bundle over $\P(S^3 V^*)$ whose fibre over a cubic curve $C'\in \P S^3 V^*$ is identified with the set of the isomorphism classes of sheaves from $M_{01}$ defined by~\refeq{eq: extension} with fixed $L$ and $C'$. This fibre
corresponds to the projective plane joining $C'$ with $L^*$ inside the join $J(L^*, \P(S^3 V^*))$. In the notations of the example above $\xi_1$ and $\xi_2$ are the coordinates of this affine plane.
\begin{center}
\begin{tikzpicture}
\draw[line width=1mm] ++(0, 0) to ++(2, 2);
\path node at (0.6,1) {$L^*$};
\path node at (4.5,1) {$J(L^*, \P(S^3V^*))$};
\draw[very thin] ++ (0, 0) to ++(2.5cm, -3cm);
\draw[very thin] ++(2, 2) to (4.5cm, -1cm);
%\draw[very thin, fill, fill opacity=0.07] (4, -1.5) to (0, 0) to (2, 2) to (4, -1.5);
\draw[line width=0mm] (4, -1.5) to (0, 0) to (2, 2) to (4, -1.5);
%\draw[very thin] ++(3, -2.5) to (2, 2);
\path node at (4, -1.5) {\textbullet};
\path node at (4.2, -1.75) {$C'$};
\begin{scope}[shift={(2.5,-3)}]
\draw ++(0, 0) to ++(2, 0) to ++(0, 2) to ++(-2, 0) to ++(0,-2);
\path node at (1,0.5) {$\P(S^3V^*)$};
\end{scope}
\end{tikzpicture}
\end{center}
The points of $J(L^*, \P(S^3 V^*))\setminus L^*$ parameterize the extensions~\refeq{eq: extension} from $M_{01}$ with fixed $L$.

\subsubsection{Description of the complement of $M_0$ in $\bb B$.}
Let $\bb B'=\bb B\setminus M_0$. Then $\bb B'$ is a union of lines $L^*$ from each fibre over $N'$ (as explained above), it is isomorphic to the tautological $\P_1$-bundle over $N'=\P_2^*$
\begin{equation}\label{eq: B'}
\{(L, x)\in \P_2^*\times \P_2 \mid L\in \P_2^*, x\in L\}.
\end{equation}
The fibre $\P_1$ of $\bb B'$ over, say, line $L=Z(x_0)\subset \P_2$ can be identified with the space of classes of matrices~\refeq{eq: res0} with zero determinant
\[
\smat{
\xi\cdot x_2&-\xi\cdot x_1&0\\
x_0&0&x_1
\\0&x_0&x_2},\quad  \xi=\alpha x_1+\beta x_2,\quad \point{\alpha, \beta}\in \P_1.
\]

Let $N_c$ be the open subset of $N_0$ that corresponds to $3$ different (and hence non-collinear) points.  Under the isomorphism $N_0\iso H_0$ it corresponds to the open  subvariety $H_c\subset H_0$ of the non-collinear configurations of $3$ points on $\P_2$.
%This should justify the notation.

Let $M_c=\bb B_c$ be the restriction of $\bb B$ to $N_c$. Then $M_{c}\subset M_{00}\subset M_0\subset M$ are inclusions of open subvarieties of  $M$.

\section{The subvariety of singular sheaves}\label{section: singular sheaves}
Let $M'_1$ and $M'_0$ denote the intersections of the subvariety $M'=M'_{4m-1}$ of singular sheaves with $M_1$ and $M_0$ respectively.
\subsection{Characterization of singular sheaves}
\subsubsection{Singular sheaves in $M_1$}\label{subsubsection: singular in M_1}
As shown in ~\cite{IenaUnivCurve}, the subvariety $M'_1$ coincides  with the universal singular locus
\[
\{(p, C) \mid \text{$C\subset \P_2$ is a quartic plane curve, $p\in \Sing(C)$}\},
\]
which is a smooth subvariety of $M_1$ of codimension $2$.

\subsubsection{Singular sheaves in $M_0$.}
\begin{lemma}\label{lemma: singularMin}
The sheaf $\cal E_A$ from $M_0$ is singular if and only if the ideal $\cal I_{min}=\cal I_{min}(A)$ generated by all $(2\times 2)$-minors of $A$ defines a non-empty scheme.
\end{lemma}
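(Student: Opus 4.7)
The plan is to reduce the statement to a pointwise rank criterion for the matrix $A$. As the cokernel of an injective map of locally free sheaves of equal rank, $\cal E_A$ is scheme-theoretically supported on the plane quartic $C=\{\det A=0\}$; its Hilbert polynomial $4m+1$ together with stability forces purity of dimension $1$ and generic rank $1$ on $C$. Hence $\cal E_A$ is singular exactly when it fails to be locally free of rank $1$ on $C$ at some closed point $p\in C$.

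Next I would tensor the given resolution with the residue field $k(p)$, giving
\[
\dim_{k(p)}\cal E_A\otimes k(p)=3-\mathrm{rk}\,A(p),
\]
where $A(p)$ is the matrix of scalars obtained by evaluating the entries of $A$ at $p$. For $p\in C$ we have $\mathrm{rk}\,A(p)\leq 2$, with equality on a dense open subset of $C$ by the genericity of rank $1$. The key claim is: $\cal E_A$ is locally free of rank $1$ at $p\in C$ iff $\mathrm{rk}\,A(p)=2$. If the rank drops further, the fibre dimension is at least $2$ and local freeness of rank $1$ is impossible; conversely, if $\mathrm{rk}\,A(p)=2$, then $(\cal E_A)_p$ is cyclic by Nakayama, and purity together with the rank-$1$-on-$C$ condition force the annihilator of the cyclic generator to be exactly $(\det A)_p$, yielding $(\cal E_A)_p\iso\O_{C,p}$.

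Finally, the locus where $A$ drops rank to at most $1$ is tautologically the vanishing scheme $V(\cal I_{min})\subset\P_2$, and expanding $\det A$ along any row shows $\det A\in\cal I_{min}$, so $V(\cal I_{min})\subset C$ automatically. Combining, $\cal E_A$ is singular iff $V(\cal I_{min})\neq\emptyset$, which is the claim. The main subtlety lies in the implication ``$\mathrm{rk}\,A(p)=2\Rightarrow (\cal E_A)_p\iso\O_{C,p}$''; once this local identification is made precise, the rest is essentially determinantal bookkeeping.
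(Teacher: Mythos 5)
Your proposal is correct and follows essentially the same route as the paper: both directions reduce to the pointwise rank of $A$, with $\mathrm{rk}\,A(p)=2$ giving $(\cal E_A)_p\iso\O_{C,p}$ and $\mathrm{rk}\,A(p)\le 1$ giving a fibre of dimension $\ge 2$, incompatible with rank $1$ on the support. The only cosmetic difference is that the paper obtains the local isomorphism by reducing $A$ via invertible elementary transformations to $\mathrm{diag}(1,1,\det A)$, whereas you argue via Nakayama and the identification of the annihilator of the cyclic generator with $(\det A)_p$.
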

\begin{proof}
If there are no zeros of $\cal I_{min}$, then at every point of $\P_2$ at least one of the $(2\times 2)$-minors is invertible, hence using invertible elementary transformations one can bring $A$ to the form
\[
\pmat{1&0&0\\0&1&0\\0&0&\det A}
\]
and therefore $\cal E$ is locally isomorphic to $\O_C$, $C=Z(\det A)=\Supp\cal E$.

If $p$ is a zero point of $\cal I_{min}$,
then the rank of $A$ is at most $1$ at $p$. Therefore, the dimension of $\cal E(p)=\cal E_p/\frak{m}_p\cal E_p $ is at least $2$. Since the rank of $\cal E$ (on support) is $1$, we conclude that $\cal E$ is a singular sheaf.
\end{proof}

\subsection{Fibres of $M'_0$ over $N$}\label{subsection: fibres of M'}
Let us consider the restriction of $\nu$ to $M'_0$ and describe its fibres. There are the following possible cases:
\begin{enumerate}
\item fibres over $N_c\iso H_c$, i.~e., over $3$ different non-collinear points;
\item fibres over $Z\in N_0$ consisting of a simple point and a double point;
\item fibres  over curvilinear triple  points $Z\in N_0$;
\item fibres  over  non-curvilinear triple points $Z\in N_0$;
\item fibres over $N'$.
\end{enumerate}
The corresponding fibres will be referred to as fibres of type $(1)$, $(2)$, $(3)$, $(4)$, and $(5)$ respectively.
\subsubsection{Fibres of type~$(1)$}\label{subsubsection: fibres 1-1-1}
Let $Z\in H_c\iso N_c$ be a non-collinear configuration of $3$ points in $\P_2$. Then, after applying an appropriate coordinate change, we can assume without loss of generality that $Z$ is the union of three points $\pt_0=\point{1, 0, 0}$, $\pt_1=\point{0, 1, 0}$, $\pt_2=\point{0, 0, 1}$, the corresponding Kronecker module is
\[
\Phi=\pmat{x_0&x_1&0\\x_0&0&x_2},
\]
whose minors
$d_0=x_1x_2$, $d_1=-x_0x_2$, $d_2=-x_0x_1$ generate the ideal $I_Z$ of $Z$.
\begin{center}
\begin{tikzpicture}
\node at (20pt, 0) {\textbullet};
\node at (40pt, 6pt) {$x_0=0$};
\node at (45pt, -13pt) {$x_1=0$};
\node at (-35pt, -13pt) {$x_2=0$};
\node at (-20pt, 0) {\textbullet};
\node at (0, 20pt) {\textbullet};
\draw[line width=0mm] (-30pt, 0) to (30pt, 0);
\draw[line width=0mm] (-30pt, -10pt) to ++(40pt, 40pt);
\draw[line width=0mm] (30pt, -10pt) to ++(-40pt, 40pt);
\end{tikzpicture}
\end{center}

The fibre of $\nu$ over the class of  $\Phi$ in $N_0$ consists of the orbits of the matrices
\[
A=\pmat{q_0(x_0,x_1,x_2)&q_1(x_0, x_2)&q_2(x_0, x_1)\\x_0&x_1&0\\x_0&0&x_2}.
\]
The coefficients of
\begin{align}\label{eq: coef q}
\begin{split}
q_0&=a_0x_0^2+a_1x_0x_1+a_2x_0x_2+a_3x_1^2+a_4x_1x_2+a_5x_2^2,\\
q_1&=b_0x_0^2+b_2x_0x_2+b_5x_2^2,\\
q_2&=c_0x_0^2+c_1x_0x_1+c_3x_1^2
\end{split}
\end{align}
can be seen as the projective coordinates of the fibre $\nu^{-1}([\Phi])\iso \P_{11}$.

The ideal that defines the subvariety  corresponding to the singular sheaves is computed by eliminating the variables $x_0, x_1, x_2$ from the saturation of $I_Z$ with respect to the non-essential maximal ideal $(x_0, x_1, x_2)$. We perform the computations using the computer algebra system
\textsc{Singular} (cf.~\cite{SingularProgram}).

We get the ideal (see~\ref{Singular: fibres over 1-1-1} for computations)
\[
(b_0,c_0)\cap(a_3, c_3)\cap (a_5, b_5),
\]
i.~e., the fibre of $M'_0$ over $[\Phi]$ is a union of $3$ components, each being a projective subspace in $\P_{11}$ of codimension $2$. The components lie in a general position: each two components intersect along a projective subspace of codimension $4$ and the intersection of all three of them is a projective subspace of codimension $6$.

\subsubsection{Fibres of type $(2)$}\label{subsubsection: fibres 1-2}
Let $Z\in H_0\setminus H_c$ be a non-collinear configuration of a simple point $\pt_1$ and a double non-collinear point at $\pt_2$. The double point is defined by the underlying simple point $\pt_2$ and a tangent vector at $\pt_2$. Since $Z$ does not lie on a line, the tangent vector should be normal to the line joining $\pt_1$ and $\pt_2$. Therefore, after applying an appropriate coordinate change, we can assume without loss of generality that $\pt_1=\point{0, 0, 1}$, $\pt_2=\point{0, 1, 0}$, and the tangent vector at $\pt_2$ is parallel to the line given by $x_2$.
\begin{center}
\begin{tikzpicture}
\draw[line width=0mm] (-40pt, 0) to (40pt, 0);
\draw[line width=0mm] (0, -20pt) to (0, 20pt);
\draw[line width=0mm] (20pt, 0) to ++(10pt, -10pt);
\draw[line width=0mm] (20pt, 0) to ++(-10pt, 10pt);
\node at (0, 2pt) {\textbullet};
\node at (0, -2pt) {\textbullet};
\node at (20pt,0) {\textbullet};
\node at (-40pt,5pt) {$x_0=0$};
\node at (0, -25pt) {$x_2=0$};
\node at (45pt, -15pt) {$x_1=0$};
\end{tikzpicture}
\end{center}
The ideal of $Z$ equals $(x_0, x_1)\cap (x_0^2, x_2)$, the corresponding Kronecker module can be taken to be
\[
\Phi=\pmat{
x_0&x_1&0\\
0&x_0&x_2
}
\]
The fibre of $\nu$ over the class of  $\Phi$ in $N_0$ consists of the orbits of the matrices
\[
A=\pmat{q_0(x_0,x_1,x_2)&q_1(x_0, x_2)&q_2(x_0, x_1)\\
x_0&x_1&0\\
0&x_0&x_2
}.
\]
The coefficients of $q_0$, $q_1$, $q_2$ as in~\refeq{eq: coef q}
can be seen as the projective coordinates of the fibre $\nu^{-1}([\Phi])\iso \P_{11}$.

The fibre of $M'_0$ over $[\Phi]\in N$ is given by the ideal
\[
( a_3^2, c_3^2, a_3c_3, a_1c_3-a_3c_1)\cap (a_5, b_5)
\]
whose radical is
\(
(a_3, c_3)\cap (a_5, b_5),
\)
which means that the fibre consists of two components each of which is a projective subspace of $\nu^{-1}([\Phi])\iso \P_{11}$ of codimension $2$. For computations see~\ref{Singular: fibres over 1-2}.

\subsubsection{Fibres  of type $(3)$}\label{subsubsection: fibres 3 curvilinear}
Let $Z$ be a a triple curvilinear point. Without loss of generality, applying an appropriate coordinate change if necessary, we can assume that $Z$ is supported at $\pt=\point{1,0,0}$ and the ideal of  $Z$  in the affine coordinates $x=x_1/x_0$, $y=x_2/x_0$ is this case
\[
(y^3, x-sy-t^{-1}y^2), \quad s\in \k, \quad t\in \k^*.
\]
\begin{center}
\begin{tikzpicture}
\node at (0.5, 0.25) {\textbullet};
\node at (0.6, 0.36) {\textbullet};
\node at (0.4, 0.16) {\textbullet};
\draw[domain=0:.8,smooth,variable=\x,line width=0mm] plot ({\x},{\x*\x});
\end{tikzpicture}
\end{center}
The corresponding Kronecker module can be taken to be
\[
\Phi=\pmat{
x_2+2stx_0&x_1-sx_2&tx_0\\
x_1+sx_2&0&x_2
}.
\]
The fibre of $\nu$ over the class of  $\Phi$ in $N_0$ consists of the orbits of the matrices
\begin{equation}\label{eq: 3 curvilinear}
A=\pmat{q_0(x_0, x_1, x_2)&q_1(x_0, x_2)&q_2(x_0, x_1)\\
x_2+2stx_0&x_1-sx_2&tx_0\\
x_1+sx_2&0&x_2
}.
\end{equation}
The coefficients of $q_0$, $q_1$, $q_2$ as in~\refeq{eq: coef q}
can be seen as the projective coordinates of the fibre $\nu^{-1}([\Phi])\iso \P_{11}$.

The fibre of $M'_0$ over $[\Phi]\in N$ is given by the ideal
whose radical is
\[
(b_0, a_0-2sc_0),
\]
which means that the fibre consists of one component which  is a projective subspace of codimension $2$ in $\nu^{-1}([\Phi])\iso \P_{11}$. For computations see~\ref{Singular: fibres over 3}.

\subsubsection{Fibres  of type $(4)$}\label{subsubsection: 3 non-curvilinear}
Let $Z$ be a non-curvilinear triple point. After a change of coordinates we may assume that $Z$ is supported at $\pt=\point{1,0,0}$. Since there is only one non-curvilinear triple point at a given point of a smooth surface, the ideal of $Z$ equals $(x_1^2,x_1x_2, x_2^2)$, the corresponding Kronecker module can be taken to be
\[
\Phi=\pmat{
x_2&x_1&0\\
x_1&0&x_2
}.
\]
\begin{center}
\begin{tikzpicture}
\draw[line width=0mm] (-1, 0) to (1, 0);
\draw[line width=0mm] (0, 0.5) to (0, -0.5);
\node at (0,0) {\textbullet};
\node at (0,3pt) {\textbullet};
\node at (3pt, 0) {\textbullet};
\node at (1.5, 5pt) {$x_1=0$};
\node at (2pt, -0.7) {$x_2=0$};

\end{tikzpicture}
\end{center}
The fibre of $\nu$ over the class of  $\Phi$ in $N_0$ consists of the orbits of the matrices
\[
A=\pmat{q_0(x_0,x_1,x_2)&q_1(x_0, x_2)&q_2(x_0, x_1)\\
x_2&x_1&0\\
x_1&0&x_2
}.
\]
By Lemma~\ref{lemma: singularMin} all such matrices define singular sheaves since all the $(2\times 2)$-minors vanish at $\pt$. Therefore, $M'_0$ is a $\P_{11}$-bundle  over the locus of non-curvilinear triple points.
\subsubsection{Fibres of type $(5)$}
Let $[\Phi]\in N'$, then without loss of generality
\[
\Phi=\pmat{x_0&0&x_1\\0&x_0&x_2}
\] and the fibre of $\nu$ over $[\Phi]$ consists of the orbits of the matrices~\refeq{eq: fibre over N'}. By Lemma~\ref{lemma: singularMin} the sheaf defined by
\[
\pmat{q_0(x_1, x_2)&q_1(x_1, x_2)&q_2(x_0, x_1, x_2)\\x_0&0&x_1\\0&x_0&x_2}
\]
is singular if and only if the quadratic forms
\[
q_0(x_1, x_2)=a_3x_1^2+a_4x_1x_2+a_5x_2^2 \quad \text{ and }\quad q_1(x_1, x_2)=b_3x_1^2+b_4x_1x_2+b_5x_2^2
\]
have a common zero. The latter holds if and only if the resultant of $q_0$ and $q_1$
\[
R=R(q_0, q_1)(a_3, a_4, a_5, b_3, b_4, b_5)
\]
 vanishes. Since $R$ is an irreducible homogeneous polynomial of degree $4$ in variables $a_3$, $a_4$, $a_5$, $b_3$, $b_4$, $b_5$, the fibres over $N'$ are open subsets of irreducible hyper-surfaces of degree $4$ in $\P_{11}$.  These subsets are obtained by throwing away the points corresponding to matrices with zero determinant, i.~e., the line  $L^*$ (cf.~\ref{subsubsection: fibres over N'}), which is contained in the hypersurface.

\section{Main result}\label{section: main result}
The information about the fibres of $M'_0$ over $N$ obtained in the previous section allows to prove Proposition~\ref{pr: main}.  
\subsection{Dimension}
We showed that the fibres of $M'_0$ over $N$ are generically $9$-dimensional, the fibres are more than $9$-dimensional only over a subvariety of $N$ of dimension $2$.
Therefore, the dimension of $M'_0$ (and thus of $M'$) is $15$, i.~e., $M'$ has codimension $2$ in $M$.

\subsection{Singularities}
Notice that the computation from~\ref{subsubsection: fibres 1-1-1} works also locally over the base. Let us make this clear in the case of $\k=\bb C$, i.~e., in the analytic category with analytic topology.

Let us vary the points $p_0=\point{1, p_{1}^{(0)},p_{2}^{(0)}}$, $p_1=\point{p_{0}^{(1)}, 1,p_{2}^{(1)}}$, $p_2=\point{p_{0}^{(2)},p_{1}^{(2)}, 1}$ in disjoint neighborhoods in $\P_2$ of points $\point{1,0,0}$, $\point{0,1,0}$, $\point{0,0,1}$ respectively. Assume moreover that $p_0$, $p_1$, $p_2$ are always non-collinear.
Then $p_{1}^{(0)}$, $p_{2}^{(0)}$, $p_{0}^{(1)}$, $p_{2}^{(1)}$, $p_{0}^{(2)}$, $p_{1}^{(2)}$ are local coordinates of $N$ around the class of the Kronecker module
\[
\Phi=\pmat{x_0&x_1&0\\x_0&0&x_2}.
\]
Denote by $U_{p_0, p_1, p_2}$ the corresponding neighborhood of $[\Phi]$.

Let $\bar x_i$, $i=0, 1, 2$, be a linear form that defines the line not passing through $p_i$ and passing through the other two points.

The fibre of $\nu$ over the class of
\[
\bar\Phi=\pmat{\bar x_0&\bar x_1&0\\\bar x_0&0&\bar x_2}
\]
consists of the orbits of the matrices
\[
A=\pmat{\bar q_0(\bar x_0,\bar x_1,\bar x_2)&\bar q_1(\bar x_0, \bar x_2)&\bar q_2(\bar x_0, \bar x_1)\\\bar x_0&\bar x_1&0\\\bar x_0&0&\bar x_2}.
\]
The coefficients of
\begin{align*}
\bar q_0&=a_0\bar x_0^2+a_1\bar x_0\bar x_1+a_2\bar x_0\bar x_2+a_3\bar x_1^2+a_4\bar x_1\bar x_2+a_5\bar x_2^2,\\
\bar q_1&=b_0\bar x_0^2+b_2\bar x_0\bar x_2+b_5\bar x_2^2,\\
\bar q_2&=c_0\bar x_0^2+c_1\bar x_0\bar x_1+c_3\bar x_1^2
\end{align*}
can be seen as the projective coordinates of the fibre $\nu^{-1}([\bar \Phi])\iso \P_{11}$, this gives a trivialization of $\bb B$ around  $[\Phi]$. As in~\ref{subsubsection: fibres 1-1-1} we conclude that $M'$ over $U_{p_0, p_2, p_3}$ is a trivial bundle with the fibre computed in~\ref{subsubsection: fibres 1-1-1}. Therefore, $M'_c=M'\cap M_c$ is a bundle over $N_c$ with this singular fibre, which shows that $M'$ is singular. Our argument shows also that the singularities of $M'_0$ over $N_c$ lie in codimension $2$.

%Moreover, it is clear now that the codimension of $\Sing M'$ in $M'$ is $2$.

\begin{rem}
Notice that in the algebraic category a modification of the argument above would lead to a local triviality of $M'$ over $N_c$ only in \'etale topology. This would not affect however our conclusions.
\end{rem}
\subsection{Connectedness} As shown in~\ref{subsection: fibres of M'}, every fibre of $M'_0$ over $N$ is (path-)connected. Therefore, since $N$ is (path-)connected, $M'_0$ is (path-)connected. Since $M'_1$, which is isomorphic to the universal singular locus of plane quartic curves,  is (path-)connected, it remains to connect  $M'_1$ with $M'_0$.

The latter can be done, for example, as follows. Let $C$ be a quartic curve with a simple double point singularity $p_0\in C$. Fix a line through $p_0$ that is not a component of $C$ and  intersects $C$ at $3$ different points $p_0$, $p_1$, $p_2$.

Consider a degeneration $Z_t=\{p_0, p_1, p(t)\}$ of a configuration of $3$ non-collinear points on $C$ to the configuration $Z_0=\{p_0, p_1, p_2\}$, i.~e., $p(t)\to p_2$, $t\to 0$.
\begin{center}
\begin{tikzpicture}[scale=3]
\node at (0.5, -0.375) {\textbullet};
\node[above] at (0.5, -0.375) {$p_2$};
\node at (-0.5, 0.375) {\textbullet};
\node[below] at (-0.5, 0.375) {$p_1$};
\node at (0, 0) {\textbullet};
\node at (0.08, 0.08) {$p_0$};
\node at (1, 0) {\textbullet};
\node at (1.1, -0.1) {$p(t)$};
\draw[->] (1,0) to  +(-0.1,-0.2);
\draw[line width =0mm ] (0.5, -0.375) to ++(-1.04, 0.78);
\draw[line width =0mm ] (0.5, -0.375) to ++(.04, -.03);
\draw[line width=0.5pt] (-1.2, 0) to (1.2, 0);
\draw[domain=-1.2:1.2,smooth,variable=\x,line width=0.5pt] plot ({\x},{\x*(\x*\x-1)});
\end{tikzpicture}
\end{center}
This gives a degeneration of the twisted ideal sheaf $\cal E_t=\cal I_{Z_t}(1)$ of $Z_t$ in $C$ to the twisted ideal sheaf $\cal I_{Z_0}(1)$ of $Z_0$.
 
 Notice that  $\cal E_0=\cal I_{Z_0}(1)$ is a non-trivial extension~\refeq{eq: ext closed fibre} with $p=p_0$. Therefore, $\cal E_0$ defines a point in $M_1$. Since $p_0$ is a singular point of $C$, as mentioned in~\ref{subsubsection: singular in M_1}, $\cal E$ must be singular sheaf.
On the other hand, as we shall show in Section~\ref{section: sing and sing sheaves},
$\cal E_t$ is a singular sheaf for $t\neq 0$ as $p_0$ is a singular point of $C$.
This gives a path connecting $M'_0$ with $M'_1$.

\section{Singular sheaves and singularities of their support}\label{section: sing and sing sheaves}
Let $[\cal E]\in M_{00}=\bb B_0$. Let $C=\Supp \cal E$ be its support, which is a quartic curve in $\P_2$. As  $\cal E$ is a part of an exact sequence~\refeq{eq: twisted ideal}, it is a subsheaf of $\O_C(1)$, hence a torsion free sheaf on $C$.
Since torsion free sheaves on smooth curves are locally free (see e.g.~\cite[Lemma~5.2.1]{LePotierBook}), we conclude that $\cal E$ is non-singular if $C$ is smooth at all points of $Z$. So $\cal E$ can only be singular if $C$ is singular at some points of $Z$. This demonstrates  that the image of $M'_{00}=M'\cap M_{00}$ under  ~\refeq{eq: map to flags} is included in the subvariety of pairs $(C, Z)$ such that $Z$ contains a singular point of  $C$. We shall demonstrate that $M'_{00}$ generically coincides with this variety.
More precisely, the image of $M_{c}'=M'\cap M_{c}$ under the morphism
\[
 M_{c}\xra{\mu\times \nu} \P S^4V^*\times H_0
\]
consists of the pairs $(C, Z)$, $Z\subset C$, such that $C$ is a singular plane curve of degree $4$ whose singular locus contains at least  one of the points of $Z$.
\begin{pr}\label{pr: singularGeneric}
Let $[\cal E]$ as above belong to $M_c$, then

1) $\cal E$ is singular if and only if $\Sing C\cap Z\neq \emptyset$;

2) the fibre of $M'_0$ over $Z\in H_c$, $Z=\{\pt_0, \pt_1,\pt_2\}$, under the morphism $M'_0\xra{\nu}N_c\iso H_c$ corresponds to the variety of plane quartic curves through $Z$ such that one of the points of $Z$ is a singular point of $C$;

3) for each $i=0,1,2$, the variety of quartics through $Z$ such that $\pt_i$ is a singular point of $C$ coincides with one of three different irreducible components of  the fibre.
\end{pr}
\begin{proof}
Follows from the computations given in~\ref{Singular: fibres over 1-1-1}.
\end{proof}

\begin{rem}
Since $\cal E$ is a twisted ideal sheaf of $3$ different points on a quartic curve (cf.~\refeq{eq: twisted ideal}), the statement 1) of~Proposition~\ref{pr: singularGeneric} immediately follows from Lemma~\ref{lemma:free extension} below.
\end{rem}

\subsection{An observation from commutative algebra}
Let $R=\O_{C, p}$ be a local $\k$-algebra of a curve $C$ at point $p\in C$.
Let $\frak m=\frak m_{C, p}$ be its maximal ideal and let
 $\k_p=R/\frak m$ be the local ring of the structure sheaf of the one point subscheme $\{p\}\subset C$. An $R$-module homomorphism $R\xra{\varphi} \k_p$ is uniquely defined by $\varphi(1)=\lambda\in \k_p$. Then $\varphi(s)=\bar s\cdot \lambda$. If $\varphi$ is different from zero, then the kernel of $\varphi$ coincides with $\frak m$.

\begin{lemma}\label{lemma:free extension}
Consider an  exact sequence of $R$-modules.
\[
0\ra M\ra R\ra \k_p\ra 0
\]
with a non-zero $R$-module $M$.
Then $M$ is free if and only if $R$ is regular.
\end{lemma}
\begin{proof}
If $M$ is free, then  $M\iso R$ (otherwise $M\ra R$ would not be injective)  and we obtain an exact sequence of $R$-modules
\[
0\ra R\ra R\ra \k_p\ra 0,
\]
which
means  that the maximal ideal $\frak m$ of $p$  is generated by one element. Therefore,  $R$ is regular in this case.

Vice versa, assume  $R$ is regular. Notice that $M$ is always a torsion free $R$-module as a submodule of $R$. Therefore, if $R$ is regular, $M$ is free as a torsion free module over a regular one-dimensional local ring.
\end{proof}

\begin{rem}
Notice that Proposition~\ref{pr: singularGeneric} does not hold over $N_0\setminus N_c$. Indeed, take $[\cal E_A]\in M_{00}\setminus M_{c}$ with
\[
A=
\pmat{
x_2^2&0&x_1^2\\
x_0&x_1&0\\
0&x_0&x_2
}.
\]
Then the support $C$ of $\cal E_A$ is given by $x_1(x_2^3+x_0^2x_1)=0$,
one obtains an  exact sequence
\[
0\ra \cal E_A\ra \O_C(1)\ra \O_Z\ra 0
\]
such that $Z$ consists of the simple point $\point{0,0,1}$ and the double point $\point{0, 1, 0}$. In this case $\cal E_A$ is a non-singular sheaf but  $\point{0,1,0}\in Z\cap \Sing C$.

In~\ref{Singular: fibres over 1-2} we compute that every matrix $A$ as in~\ref{subsubsection: fibres 1-2} with $a_3=0$, $a_5\neq 0$ defines a non-singular sheaf, however the intersection of $Z$ with the singular locus of the supporting curve $C$ is non-empty.
\end{rem}

\appendix
\section{Computations of the fibres of $M'_0$ over $N$ with \textsc{Singular} }\label{appendix: computations}
\subsection{Fibres of type $(1)$}\label{Singular: fibres over 1-1-1}
%\phantom{aaa}
\VerbatimInput[fontsize=\tiny]{t1.sng.out}
\subsection{Fibres of type $(2)$}\label{Singular: fibres over 1-2}
\VerbatimInput[fontsize=\tiny]{t2.sng.out}
\subsection{Fibres of type $(3)$}\label{Singular: fibres over 3}
\VerbatimInput[fontsize=\tiny]{t3.sng.out}

%\bibliographystyle{plain}
%\bibliography{literature}
\def\cprime{$'$} \def\cprime{$'$} \def\cprime{$'$}

\end{document}